\DeclareMathOperator{\led}{led}
\DeclareMathOperator{\inc}{inc}
\DeclareMathOperator{\dist}{dist}
\DeclareMathOperator{\width}{width}
\newcommand{\mbf}{\mathbf}
\newcommand{\mc}{\mathcal}
\newtheorem{thm}{Theorem}%[section]
\newtheorem{cor}[thm]{Corollary}
\newtheorem{lem}[thm]{Lemma}
\newtheorem{prop}[thm]{Proposition}
\newtheorem*{thm*}{Theorem}
\newtheorem*{cor*}{Corollary}
\newtheorem*{lem*}{Lemma}
\newtheorem*{prop*}{Proposition}
\theoremstyle{definition}
\newtheorem*{defn*}{Definition}
\newtheorem*{prob*}{Problem}
\theoremstyle{definition}
\theoremstyle{definition}
\newtheorem*{conj*}{Conjecture}
\newtheorem*{example*}{Example}
\begin{document}
\title{The Reversal Ratio of a Poset}
\author{Graham Brightwell}
\email{G.R.Brightwell@lse.ac.uk}
\address{Department of Mathematics\\London School of Economics\\Houghton Street\\London\\WC2A 2AE\\United Kingdom}
\author{Mitchel T.\ Keller}
\thanks{This research was conducted while the second author was a
  Visiting Fellow at the London School of Economics supported by a Marshall
  Sherfield Fellowship.}
\email{kellermt@wlu.edu}
\address{\emph{Current address}: Department of Mathematics\\Washington
  and Lee University\\ 204 W Washington Street\\Lexington, VA
  24450\\United States}
\date{9 December 2013}

\begin{abstract}
  Felsner and Reuter introduced the linear extension diameter of a
  partially ordered set $\mbf{P}$, denoted $\led(\mbf{P})$, as the
  maximum distance between two linear extensions of $\mbf{P}$, where
  distance is defined to be the number of incomparable pairs appearing
  in opposite orders (reversed) in the linear extensions. In this paper, we
  introduce the reversal ratio $RR(\mbf{P})$ of $\mbf{P}$ as the ratio
  of the linear extension diameter to the number of (unordered)
  incomparable pairs. We use probabilistic techniques to provide a
  family of posets $\mbf{P}_k$ on at most $k\log k$ elements for which the
  reversal ratio $RR(\mbf{P}_k)\leq C/\log k$, where $C$ is an absolute constant.
  We also examine the questions of bounding the reversal ratio in terms of
  order dimension and width.
\end{abstract}

\maketitle

\section{Introduction}\label{sec:intro}

Let $\mbf{P}=(X,P)$ be a finite poset that is not a chain, and let
$L_1$ and $L_2$ be two linear extensions of $\mbf{P}$. The
\emph{distance between $L_1$ and $L_2$}, denoted $\dist(L_1,L_2)$, is
defined to be the number of pairs $(x,y)\in X\times X$ such that $x<_{L_1} y$
and $y<_{L_2} x$. Such an incomparable pair is said to be
\emph{reversed} by the linear extensions $L_1$ and $L_2$. In
\cite{felsner:led}, Felsner and Reuter introduced the \emph{linear 
  extension diameter} of $\mbf{P}$ as
\[\led(\mbf{P}) = \max_{L_1,L_2}\ \dist(L_1,L_2),\]
where the maximum is taken over all pairs of linear extensions of
$\mbf{P}$. The linear extension diameter can also be viewed as the
diameter of the \emph{linear extension graph} of $\mbf{P}$: this graph has
a vertex corresponding to each linear extension of $\mbf{P}$, with two
vertices adjacent if the corresponding linear extensions differ only
in the transposition of a single adjacent pair of elements.

If we let $\inc(\mbf{P})=|\{(x,y)\in X\times X \colon x\|_P y\}|/2$,
the number of \emph{unordered} incomparable pairs of $\mbf{P}$, then
it immediately follows that $\led(\mbf{P})\leq \inc(\mbf{P})$, with
equality if and only if $\mbf{P}$ has dimension $2$. We define the
\emph{reversal ratio} of a pair $(L_1,L_2)$ of linear extensions of
$\mbf{P}$ to be
$RR(\mbf{P};L_1,L_2)=\dist(L_1,L_2)/\inc(\mbf{P})$. The \emph{reversal
  ratio of the poset $\mbf{P}$} is then $RR(\mbf{P}) =
\led(\mbf{P})/\inc(\mbf{P}) = \max_{L_1,L_2} RR(\mbf{P};L_1,L_2)$.

A question of interest about linear extension diameter is to determine
if there exists a constant $c>0$ such that $\led(\mbf{P})\geq
c\inc(\mbf{P})$, or equivalently if $RR(\mbf{P})\geq c$ for all
$\mbf{P}$. Let us briefly motivate this by examining cases where there
is such a constant.  First, if $\mbf{P}$ has low dimension, then
taking some pair of linear extensions from a small realizer reverses a
large proportion of the incomparable pairs.  Second, if the width of
$\mbf{P}$ is a positive proportion of the number of elements, then
some pair of linear extensions inverting the large antichain
automatically has a large reversal ratio.  Thus, a poset $\mbf P$ with
small reversal ratio must have many incomparable pairs, low width, and
unbounded dimension.  One possible candidate with these properties is
the $d$-dimensional Boolean lattice $\mbf{Q}_d$, but Felsner and
Reuter~\cite{felsner:led} found a pair of linear
extensions of $\mbf{Q}_d$ reversing approximately half of the
incomparable pairs of $\mbf{Q}_d$. Felsner and
Massow~\cite{felsner:led-downlat} later showed that this pair is
optimal, establishing that $\led(\mbf{Q}_d) = 2^{2d-2} -
(d+1)2^{d-1}$. More generally, in \cite{felsner:led-downlat}, Felsner
and Massow give a polynomial-time algorithm for computing the linear
extension diameter of downset lattices of $2$-dimensional posets; an
analysis of their proof shows that $RR(\mbf{P}) \ge 1/2$ for all such
posets.  By contrast, Brightwell and Massow~\cite{brightwell:led-diampairs} showed that it is NP-complete
to compute the linear extension diameter of an arbitrary poset.

However, both \cite{felsner:led-downlat}
and \cite{felsner:led} reference personal communication from
Brightwell that there is no such constant.  Brightwell's idea was to take a
random graph order $P_{n,p(n)}$ (see~\cite{albert-frieze, bollobas-brightwell:width-rgos,
bollobas-brightwell:dimension-rgos}) for an appropriately chosen
$p(n)$, which is known to have the properties indicated above, and show that
there is a function $r(n) = o(1)$ such that $RR(P_{n,p(n)}) \le r(n)$
w.h.p.\ (i.e., with probability tending to~1 as the size of the poset tends to
infinity).

It turns out to be awkward to analyze exactly how small the reversal
ratio is for random graph orders. In this paper, we examine this
problem further by considering a different model of random posets
$\mbf{P}_k$ and showing that $RR(\mbf{P}_k) \leq 27/\log k$.  (The poset
$\mbf{P}_k$ has width $k$ and about $0.434 k\log k$ elements.)  We also
consider the problems of bounding the linear extension diameter of a
poset of fixed dimension and fixed width.

Readers unfamiliar with terminology and notation should consult
Trotter's monograph \cite{trotter:dimbook}.

\section{Posets with arbitrarily small reversal ratio}\label{sec:prob}

In this section, we show there is no constant $c$ such that
$\led(\mbf{P})\geq c\,\inc(\mbf{P})$ for all posets $\mbf{P}$. To
accomplish this, we define a family of posets whose linear extension
diameter tends to zero.

Let $k$ be a multiple of~3. We say that a bipartite graph with vertex
classes $A$ and $B$, each of size $k$, has the {\em doubling property} if (i)~for every subset $X$
of $A$ of size at most $k/3$, the set $\Gamma(X)$ of neighbors of $X$
has size greater than $2|X|$, and (ii)~for every subset $Y$ of $B$ of
size at most $k/3$, $|\Gamma(Y)| \ge 2 |Y|$.

The doubling property is an instance of a {\em vertex expansion
  property}.  A graph $G$ is said to be an {\em $(\alpha, \rho)$
  vertex-expander} if, for every subset $X$ of $V(G)$ of size at most
$\alpha |V(G)|$, we have $|\Gamma(X)| \ge \rho|X|$.  (Here the only
vertices of $X$ included in $\Gamma(X)$ are those that have a neighbor
in $X$.)  If $G$ is a $(1/3,2)$ vertex-expander on $k$ vertices, take
two copies $A$, $B$ of $V(G)$, and form a bipartite graph $H(G)$ with
vertex classes $A$ and $B$ by joining $a$ and $b$ if and only if the
corresponding vertices in $G$ are adjacent.  Thus the maximum degree
of $H(G)$ is the same as in $G$, and $H(G)$ has the doubling property.

There are explicit constructions known of $(1/3,2)$ vertex expanders:
for instance, {\em Ramanujan graphs} (\cite{lubotzky-phillips-sarnak})
with degree at least~15 are $(1/3,2)$ vertex expanders.  To keep our
proof self-contained, we shall give a short argument showing that a
certain class of random bipartite graphs with maximum degree at
most~10 has the doubling property w.h.p.

Fix an integer $k$ which is a multiple of~3, and a real number
$\varepsilon > 0$.  Given a bipartite graph $G(k)$ on two vertex classes
$A$ and $B$, each of size $k$, with maximum degree at most~$10$,
having the doubling property, we now describe how to construct our posets.
Let $A^{k,\varepsilon}$ be the union of $m=\lfloor \varepsilon \log k \rfloor$ disjoint
sets $A_1,\dots,A_m$, each of size $k$.  We form a poset
$\mbf{P}_{k,\varepsilon} = (A^{k,\varepsilon},P^{k,\varepsilon})$ with
vertex set $A^{k,\varepsilon}$ as follows.  Let $G_i$ be a bipartite
graph isomorphic to $G(k)$ with vertex classes $A_i$ and $A_{i+1}$.
The comparabilities between $A_i$ and $A_{i+1}$ are defined so that
$x_i\in A_i$ is less than $x_{i+1}\in A_{i+1}$ if and only if
$x_ix_{i+1}$ is an edge of $G_i$. The partial order $P^{k,\varepsilon}$ is then
formed by taking the transitive closure of the union of these sets of
comparabilities.  To be specific, $x_0 \in A_i$ is below $x_j \in
A_{i+j}$ if and only if there are vertices $x_\ell \in A_{i + \ell}$
($\ell = 1, \dots, j-1$), such that each $x_\ell x_{\ell+1}$ ($\ell
=0,\dots, j-1$) is an edge of the copy $G_i$ of the graph $G(k)$.

A very similar construction was used by Fox~\cite{fox} in a different context:
he required an example of an $n$-element poset such that no element is comparable
with more than $n^\epsilon$ others, while for each two sets $A$ and $B$ of order 
$C n /\log n$, some element of $A$ is comparable with some element of $B$.  

Before showing that a poset $\mbf{P}_{k,\varepsilon}$ as above has small reversal
ratio, we show the existence of the bipartite graphs $G(k)$.  We
describe the {\em configuration model} for random bipartite graphs.
Given positive integers $r$ and $k$ (in our application, $r$ will be
equal to 10, and $k$ will be sufficiently large and a a multiple of~3),
we take two disjoint vertex sets $A$ and $B$, each of size $k$.  We now
form two $kr$-element sets $\tilde A$ and $\tilde B$: for each vertex
$a$ of $A$, we place $r$ elements $a_1, \dots, a_r$ into $\tilde A$, and
likewise for $\tilde B$.  Next, we take a uniform random matching of
the elements of $\tilde A$ and $\tilde B$: so each element $a_i$ of
$\tilde A$ is paired with exactly one element of $\tilde B$.  Finally,
we form our bipartite graph on $A \cup B$ by putting an edge between
$a \in A$ and $b \in B$ whenever some $a_i$ is paired with some $b_j$.
We denote a random graph generated in this manner $G_r(A,B)$.  The
random graph $G_r(A,B)$ is bipartite, with maximum degree at most $r$.
There may be instances of pairs $(a,b)$ where two different $a_i$ are
paired with some $b_{j(i)}$, resulting in these vertices having degree
less than $r$.  However, if $r$ is fixed and $k$ is large, then there
are w.h.p.\ few such instances.

The configuration model is the basis of the standard approach for
analyzing random $r$-regular graphs.  To get an $r$-regular graph, one
has to avoid the problem of multiple edges, for instance by rejection
sampling, but for our purposes there is no difficulty with using the
random graph $G_r(A,B)$ directly.

We will work with expressions containing $r$ to begin with for greater
clarity. We then substitute $r=10$ to obtain useful numerical
estimates.

\begin{lem}\label{lem:expand}
  Let $G_r(A,B)$ be a random bipartite graph on vertex sets $A$ and
  $B$ of size $k$, chosen according to the configuration model. For
  $k$ sufficiently large and $r=10$, the probability that $G_r(A,B)$
  does not have the doubling property is at most $2/k^5$.
\end{lem}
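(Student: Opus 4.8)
I would prove this by a first-moment argument over the subsets that could witness a failure of the doubling property. A failure of either clause~(i) or clause~(ii) implies the existence of a set $S$, lying in $A$ or in $B$ respectively, with $1\le|S|\le\floor{k/3}$ and $|\Gamma(S)|\le 2|S|$; so, since the configuration model is symmetric under exchanging $A$ and $B$, it is enough to bound by $k^{-5}$ the probability that some $X\subseteq A$ with $1\le|X|\le\floor{k/3}$ has $|\Gamma(X)|\le 2|X|$, and then add the two contributions to get $2/k^5$. The first step is to estimate, for a fixed $X$ with $|X|=s$ (so $2s\le k$ throughout) and a fixed $T\subseteq B$ with $|T|=2s$, the probability that $\Gamma(X)\subseteq T$. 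This event says precisely that all $rs$ half-edges at vertices of $X$ are matched, in the uniform random perfect matching of $\widetilde A$ with $\widetilde B$, to the $2rs$ half-edges at vertices of $T$; revealing these partners one at a time gives
\[
  \Pr[\Gamma(X)\subseteq T]=\prod_{i=0}^{rs-1}\frac{2rs-i}{rk-i}\le\Bigl(\frac{2s}{k}\Bigr)^{rs},
\]
each factor being at most $2rs/(rk)$ because $2s\le k$. Summing over the $\binom{k}{s}$ sets $X$ and the $\binom{k}{2s}$ sets $T$ of size $2s$ (the event $|\Gamma(X)|\le 2s$ being covered by the events $\Gamma(X)\subseteq T$), and putting $r=10$, the $A$-side failure probability is at most $\sum_{s=1}^{\floor{k/3}}a_s$ with $a_s=\binom{k}{s}\binom{k}{2s}(2s/k)^{10s}$.

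It then remains to show $\sum_s a_s\le k^{-5}$ for $k$ large, and I would split the range of $s$ at $s=k/5$. For $s\le k/5$ the crude bound $\binom{k}{j}\le(ek/j)^j$ gives $a_s\le\bigl(e^{3}2^{8}(s/k)^{7}\bigr)^{s}$; on this range the base is strictly less than $1$, so these terms fall off rapidly and the partial sum is dominated by $a_1=O(k^{-7})$. For $k/5<s\le\floor{k/3}$ that crude estimate is too lossy, so I would instead use the sharp bound $\binom{k}{\alpha k}\le 2^{kH(\alpha)}$, where $H$ is the binary entropy function; with $\alpha=s/k$ this gives $a_s\le 2^{kg(\alpha)}$, where
\[
  g(\alpha)=H(\alpha)+H(2\alpha)+10\,\alpha\log_2(2\alpha).
\]
A routine check shows that $g$ is negative and increasing on $[1/5,1/3]$, with $g(1/3)=2H(1/3)+\tfrac{10}{3}\log_2\tfrac23\approx-0.11<0$; hence this part of the sum is at most $k\cdot 2^{-ck}$ for an absolute constant $c>0$. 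Combining the two ranges gives $\sum_s a_s\le k^{-5}$ once $k$ is large, which proves the lemma.

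The crux is the regime $s\approx k/3$. There the union bound is essentially tight: the probability estimate $(2s/k)^{10s}$ and the entropic sizes of the two binomial coefficients almost exactly cancel, leaving the small exponent $g(1/3)\approx-0.11$. This is precisely why the statement requires $r=10$ — for $r\le 9$ the corresponding negative term $r\alpha\log_2(2\alpha)$ is too small in magnitude, $g(1/3)$ becomes positive, and the first-moment bound fails for sets of size $k/3$. For the same reason one cannot afford the lossy bound $\binom{k}{2s}\le(ek/2s)^{2s}$ in that regime (using it alone already makes the relevant exponent positive), so the sharper entropy estimate genuinely is needed there, whereas the cruder bound is perfectly adequate for $s\le k/5$.
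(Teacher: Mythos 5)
Your proof is correct and follows essentially the same strategy as the paper: a first-moment bound on the number of pairs $(X,T)$ with $\Gamma(X)\subseteq T$, summed over $|X|=s\le k/3$ with the range of $s$ split into a small regime (crude $(ek/j)^j$ bounds suffice) and a critical regime near $k/3$ (a numerical check of a one-variable function is needed). The only technical divergence is in the critical regime: you discard the factor $(1-s/k)^{(k-s)r}$ from the matching probability and compensate with entropy bounds $\binom{k}{\alpha k}\le 2^{kH(\alpha)}$, whereas the paper retains that factor and verifies negativity of the resulting exponent by a plot --- both come down to the same kind of numerical verification at $\alpha=1/3$.
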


\begin{proof}
  Take any natural number $x \le k/3$, any subset $X$ of $A$ of size
  $x$, and any subset $Y$ of $B$ of size $2x$.  Now the number of
  pairings in the configuration model such that all neighbors of $X$
  lie in $Y$ is equal to
\[
\binom{2xr}{xr} (xr)! (kr-xr)! = \frac{(2xr)!}{(xr)!} ((k-x)r)!,
\]
and so the probability that all neighbors of $X$ lie in $Y$ is equal to
\[
\frac{\frac{(2xr)!}{(xr)!} ((k-x)r)!}{(kr)!} \le 2 \frac{(2xr/e)^{2xr}}{(xr/e)^{xr}}\frac{((k-x)r/e)^{(k-x)r}}{(kr/e)^{kr}}
= 2 \left[\frac{(2x)^{2x}(k-x)^{k-x}}{x^xk^k}\right]^r.
\]
To prove the lemma, we find an upper bound on the expected number $e_x$ of pairs $(X,Y)$ as above,
for each $x \le k/3$.  This expectation $e_x$ is at most
\begin{align*}
2 \binom{k}{x}\binom{k}{2x}\left[\frac{(2x)^{2x}(k-x)^{k-x}}{x^xk^k}\right]^r
&\leq \left(\frac{ke}{x}\right)^x \left(\frac{ke}{2x}\right)^{2x}
\left[\frac{4^xx^x}{k^x}\left(1-\frac{x}{k}\right)^{k-x}\right]^r\\
&=\frac{x^{(r-3)x}}{k^{(r-3)x}} e^{3x} 4^{(r-1)x} \left(1-\frac{x}{k}\right)^{(k-x)r}.
\end{align*}
Since the last factor above is at most $1$ and we are taking $r=10$, $e_x$ is at most
\[
p(x) := \left[\left(\frac{x}{k}\right)^7 e^3 4^9\right]^x.
\]
For $1 \le x \le k/11$, $p(x)$ is a convex function which is at most
$$
\max ( p(1), p(k/11) )
= \max \left( \frac{e^3 4^9}{k^7}, \left[\frac{e^3 4^9}{11^7}\right]^{k/11} \right)
\le \frac{1}{k^6}
$$
for sufficiently large $k$.  Thus $e_x \le 1/k^6$ for $1 \le x \le
k/11$, provided $k$ is sufficiently large.

For $x/k\geq 1/11$, we retain the factor of
$(1-x/k)^{(k-x)r}$. Writing $\alpha$ for $x/k$, we then have
\[
e_x \le \left[\alpha^7 e^3 4^9(1-\alpha)^{10(1/\alpha-1)}\right]^x.
\]
For $1/11\leq \alpha \leq 1/3$, the inner quantity is less than $3/4$, as shown by the plot
in Figure~\ref{fig:plot}.

\begin{figure}[h]
  \centering
  \includegraphics[scale=0.65]{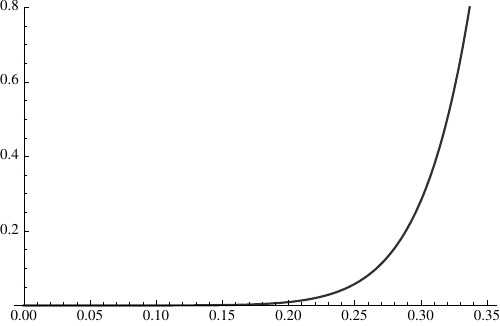}
  \caption{A plot of $\alpha^7 e^3 4^9(1-\alpha)^{10(1/\alpha-1)}$}
  \label{fig:plot}
\end{figure}

Therefore, for $k/11 \le x \le k/3$, we have $e_x \le (3/4)^{k/11} \le 1/k^6$,
for $k$ sufficiently large.  Summing over all values of $x$ up to $k/3$, the probability that
every set $X \subset A$ of size at most $k/3$ has a neighborhood of size greater than $2|X|$ is at least
$1 - 2/k^5$.  We have the same bound for the probability that every set $Y \subset B$ of size at most $k/3$
has a neighborhood of size greater than $2|Y|$, and thus we obtain the
claimed result.
\end{proof}

All we need from Lemma~\ref{lem:expand} is the existence, for all $k$
sufficiently large, of a graph of maximum degree at most~10 with the
doubling property, so that we may construct our example $\mbf{P}_{k,\varepsilon}$ as
stated.

Before bounding $\led(\mbf{P}_{k,\varepsilon})$, we look at $\inc(\mbf{P}_{k,\varepsilon})$.

\begin{prop}\label{prop:inc-Pk}
  For $k$ sufficiently large and $\varepsilon \leq 1/\log r$, the
  number of incomparable pairs in $\mbf{P}_{k,\varepsilon}$ is at least
\[
\frac{r-2}{2(r-1)} \varepsilon^2 k^2 \log^2 k.
\]
\end{prop}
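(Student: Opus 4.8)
Denote by $c$ the number of (unordered) comparable pairs of $\mbf{P}_{k,\varepsilon}$, and write $m=\lfloor\varepsilon\log k\rfloor$, so that $\mbf{P}_{k,\varepsilon}$ has $mk$ elements and $\inc(\mbf{P}_{k,\varepsilon})=\binom{mk}{2}-c$. The plan is to show that $c=O(k^2)$, which is negligible next to $\binom{mk}{2}=\Theta(k^2\log^2 k)$; the claimed bound then follows because the leading coefficient $\tfrac12$ of $\binom{mk}{2}$ is strictly larger than $\tfrac{r-2}{2(r-1)}$. Only the bound on the maximum degree of $G(k)$ is used in this proposition; the doubling property plays no role here.

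First I would bound $c$ using the degree condition. Fix $x\in A_i$ and $j>i$. Since $x$ lies below at most $r$ elements of $A_{i+1}$, each of which lies below at most $r$ elements of $A_{i+2}$, and so on, an immediate induction (using the description of $P^{k,\varepsilon}$ via paths through consecutive $G_\ell$'s) shows that $x$ lies below at most $r^{j-i}$ elements of $A_j$. Hence the number of comparable pairs with lower element in $A_i$ and upper element in $A_{i+d}$ is at most $k\,r^{d}$, and summing over the $m-d$ choices of $i$ and then over $d$ gives
\[
c\ \le\ k\sum_{d=1}^{m-1}(m-d)\,r^{d}\ \le\ k\,r^{m}\sum_{d\ge 1}\frac{d}{r^{d}}\ =\ \frac{r^{m+1}}{(r-1)^2}\,k .
\]
Here is where the hypothesis $\varepsilon\le 1/\log r$ enters: it gives $r^{m}\le r^{\varepsilon\log k}=k^{\varepsilon\log r}\le k$, so $c\le \frac{r}{(r-1)^2}\,k^2=O(k^2)$.

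It remains to bound $\binom{mk}{2}$ from below. Since $m\ge\varepsilon\log k-1$,
\[
\binom{mk}{2}\ \ge\ \frac{(mk)^2}{2}-\frac{mk}{2}\ \ge\ \frac{(\varepsilon\log k-1)^2k^2}{2}-\frac{\varepsilon k^2\log k}{2}\ =\ \frac{\varepsilon^2k^2\log^2 k}{2}-O(k^2\log k),
\]
and therefore
\[
\inc(\mbf{P}_{k,\varepsilon})\ \ge\ \frac{\varepsilon^2k^2\log^2 k}{2}-O(k^2\log k)-\frac{r}{(r-1)^2}k^2 .
\]
Since $\tfrac{r-2}{2(r-1)}=\tfrac12-\tfrac{1}{2(r-1)}$ is bounded away from $\tfrac12$, for all sufficiently large $k$ the leading term absorbs both the $O(k^2\log k)$ error and the $O(k^2)$ comparability count, yielding $\inc(\mbf{P}_{k,\varepsilon})\ge\frac{r-2}{2(r-1)}\varepsilon^2k^2\log^2 k$. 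The only step requiring care is this final bookkeeping of lower-order terms, and there is no genuine obstacle: the fixed gap between $\tfrac12$ and $\tfrac{r-2}{2(r-1)}$ is precisely what swallows the corrections, while the point of $\varepsilon\le 1/\log r$ is exactly to keep $r^{m}$ — and hence $c$ — at most linear, rather than super‑polynomial, in $k$.
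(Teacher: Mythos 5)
Your proof is correct and follows essentially the same route as the paper: both arguments use only the maximum-degree bound $r$ on the graphs $G_i$ together with $\varepsilon\le 1/\log r$ (so $r^m\le k$) to show, via a geometric series, that the comparable pairs are a negligible fraction of all $\binom{mk}{2}$ pairs, and then subtract. Your level-by-level count and explicit absorption of the lower-order terms (including the floor in $m=\lfloor\varepsilon\log k\rfloor$) amount to slightly more careful bookkeeping than the paper's per-element bound of $\frac{r^m-1}{r-1}\le\frac{k}{r-1}$ comparabilities, but it is the same approach.
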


\begin{proof}
  The number of elements of $\mbf{P}_{k,\varepsilon}$ is
  $n= km=k \lfloor\varepsilon \log k\rfloor$. Using just the fact that
  each graph $G_i$ has maximum degree at most $r$, we see that each
  element is comparable to at most
\[
1+ r + r^2 +\cdots + r^{m -1} = \frac{r^m-1}{r-1} \le \frac{k}{r-1}
\]
elements, including itself. Therefore, the number of comparable pairs
is at most $nk/(2(r-1)) \leq \binom{n}{2}/(r-1)$ for $k$ sufficiently
large. Now we see that for $k$ sufficiently large, the number of
incomparable pairs is at least
\[
\frac{r-2}{r-1} \binom{n}{2} \ge \frac{r-2}{2(r-1)}\varepsilon^2 k^2 \log^2 k
\]
as claimed.
\end{proof}

We now turn to estimating the linear extension diameter of $\mbf{P}_{k,\varepsilon}$.

\begin{prop}\label{prop:led-Pk}
Let $k$ be a multiple of~3, and suppose $0 < \varepsilon \le 1$.
The linear extension diameter of $\mbf{P}_{k,\varepsilon}$ is at most
$(31/6) \varepsilon k^2\log k$, for sufficiently large $k$.
\end{prop}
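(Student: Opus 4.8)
We want to bound $\led(\mbf{P}_{k,\varepsilon})$ from above. The key structural fact is that the "layers" $A_1, \dots, A_m$ impose a strong linear skeleton on the poset: if $x_i \in A_i$ and $x_j \in A_j$ with $j \geq i+2$, then $x_i$ and $x_j$ need not be comparable, but... actually the real point is the opposite — elements in layers far apart are *often* incomparable, and those are exactly the pairs a pair of linear extensions might try to reverse. So the strategy is: every incomparable pair $(x,y)$ reversed by some pair of linear extensions $L_1, L_2$ must satisfy a neighborhood/expansion constraint, and the doubling property forces there to be few such reversible pairs.

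**Key steps.** (1) Fix linear extensions $L_1, L_2$ achieving $\led(\mbf{P}_{k,\varepsilon})$. Classify incomparable pairs by which pair of layers $(A_i, A_j)$ their elements lie in (with $i \le j$; note $i = j$ is possible since each $A_i$ is an antichain, and those contribute at most $m\binom{k}{2}$ pairs total, which is $O(\varepsilon k^2 \log k)$ and negligible against the claimed bound... wait, $m\binom{k}{2} \approx \frac12 \varepsilon k^2 \log k$, so this is the same order — I need to be careful and actually this term is fine since the bound is $(31/6)\varepsilon k^2 \log k$, leaving room). (2) The main work is bounding, for each fixed pair of layers $A_i, A_{i+t}$ with $t \geq 1$, the number of incomparable pairs between them that are reversed. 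Here I would use the doubling property: the "up-set restricted to layer $A_{i+t}$" of a small set $X \subseteq A_i$ has size at least $2^t |X|$ as long as we stay below the $k/3$ threshold — so the up-sets grow geometrically, and dually the down-sets. (3) The crucial observation: if $(x, y)$ with $x \in A_i$, $y \in A_{i+t}$ is reversed by $L_1, L_2$ (say $x <_{L_1} y$ but $y <_{L_2} x$), then in $L_2$, the element $y$ sits below $x$; but everything below $x$ in $\mbf{P}$ must sit below $x$ in $L_2$, and everything above $y$ in $\mbf{P}$ must sit above $y$ in $L_2$, hence above $x$ in $L_2$. So $D(x) \cap A_{i-1}$ and $U(y) \cap A_{i+t+1}$ interact in a constrained way across the "position of $x$" in $L_2$. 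The expansion then forces that $x$ cannot have too large a down-set or $y$ too large an up-set, OR that the pair of layers cannot be too far apart.

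**The main obstacle.** The delicate part is turning the doubling/expansion property — which is a statement about *small* sets (size $\le k/3$) with multiplicative growth $2$ — into a *counting* bound on reversed pairs that sums nicely over all layer-pairs $(i, j)$ and yields the constant $31/6$. The geometric growth $2^t$ caps out after about $\log_2(k) \cdot$ something steps, so for layers more than $O(\log k)$ apart the expansion argument degenerates; but $m = \lfloor \varepsilon \log k\rfloor$ and $\varepsilon \le 1$, so $m$ might already exceed the expansion horizon $\log_2(k/3)$, meaning we cannot assume all pairs of layers are "within expansion range." I expect the argument instead runs: for most elements $x$, the up-set $U(x)$ already *saturates* (reaches size $> k/3$, then grows to fill most of a layer) within $O(\log k)$ layers, so $x$ is comparable to almost all of every layer sufficiently far above it — hence incomparable to *few* elements far above, and those few incomparable pairs are the only candidates for reversal. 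Counting: each $x$ has at most (a bounded number, from the non-expanding initial segment) $\times k$ incomparable partners in nearby layers that could be reversed, plus $o(k)$ in far layers; summing over $n = km$ elements and dividing by $2$ gives the bound. Pinning down the exact constants in the "saturation" step — how fast $U(x)$ grows once it exceeds $k/3$, and bounding the number of $x$ with anomalously small up-sets — is where the factor $31/6$ (and the restriction to sufficiently large $k$) will come from, and that bookkeeping is the part I'd need to do carefully rather than wave at.
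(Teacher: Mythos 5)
Your central mechanism does not work, and the idea that actually drives the proof is missing from your outline. You propose that for most $x$ the up-set of $x$ ``saturates'' within $O(\log k)$ layers, so that $x$ is comparable to almost everything far above it, leaving only few incomparable far pairs as candidates for reversal. This is false: the doubling property applies only to sets of size at most $k/3$ and, once an up-set passes $k/3$, guarantees only that it exceeds $2k/3$ in the next layer --- never that it fills a layer --- and in any case the maximum degree bound ($r=10$) caps the number of elements comparable to a given $x$ at $1+r+\cdots+r^{m-1}$, which in the parameter range where the construction is used ($\varepsilon\le 1/\log r$) is $O(k)$ out of $n=km$ elements. So distant pairs are overwhelmingly \emph{incomparable}; indeed Proposition~\ref{prop:inc-Pk} shows $\inc(\mbf{P}_{k,\varepsilon})$ is of order $\varepsilon^2k^2\log^2k$, and most of that mass sits between far-apart layers. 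Hence no argument that merely limits the number of candidate pairs can reach a bound of order $\varepsilon k^2\log k$; one must show that a \emph{single} pair of linear extensions can reverse only a vanishing fraction of the candidates. Your step (3) gestures at positional constraints on down- and up-sets in $L_2$ but never extracts a usable counting statement from them, and your own ``expansion horizon'' worry is the symptom of heading down this dead end.

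The paper's proof runs differently. For an arbitrary linear extension $L$ it introduces the layer-sorted extension $L'$ (order by layer index, ties within a layer broken by $L$) and uses the triangle inequality $\dist(L_1,L_2)\le\dist(L_1,L_1')+\dist(L_1',L_2')+\dist(L_2',L_2)$, where the middle term is at most $m\binom{k}{2}\le\tfrac12\varepsilon k^2\log k$. The heart is the bound $\dist(L,L')\le(7/3)\varepsilon k^2\log k$, proved by a top/bottom-segment collision argument: for layers $A_i$ and $A_j$ with $j=i+2s+1$, let $T$ be the top $\lceil k/(2^s\cdot3)\rceil$ elements of $A_i$ in $L$ and $T'$ the bottom $\lceil k/(2^s\cdot3)\rceil$ elements of $A_j$ in $L$. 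If some element of $A_i\setminus T$ sat above some $y\in A_j\setminus T'$ in $L$, then all of $T$ would lie above all of $T'\cup\{y\}$ in $L$; expanding $T$ upward $s$ steps and $T'\cup\{y\}$ downward $s$ steps via the doubling property yields sets of size at least $k/3$ in the adjacent layers $A_{i+s},A_{i+s+1}$, and one more application of the doubling property forces an edge between them, producing in $L$ an element of $T$ below an element of $T'\cup\{y\}$ --- a contradiction. Thus every pair reversed between $A_i$ and $A_j$ meets $T\cup T'$, giving at most about $2k\cdot k/(2^s\cdot3)$ such pairs; this decays geometrically in the layer distance, and summing over distances and over $i$ (with the analogous even-distance case) gives the constant $7/3$, whence $2\cdot\tfrac73+\tfrac12=\tfrac{31}{6}$. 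It is this comparison-to-$L'$ device together with the segment/expansion collision --- not scarcity of incomparable far pairs --- that converts the doubling property into the claimed bound.
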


\begin{proof}
  For any linear extension $L$ of $\mbf{P}_{k,\varepsilon}$, we define a second
  linear extension $L'$. For $x\in A_i$ and $y\in A_j$, we let
  $x<_{L'} y$ if $i<j$. If $i=j$, then $x<_{L'} y$ if and only if
  $x<_L y$.  Our plan is to prove an upper bound on $\max_L \dist
  (L,L')$.  Given such a bound, we then note that, for any pair
  $(L_1,L_2)$ of linear extensions, we have
  \[
  \dist(L_1,L_2)\leq
  \dist(L_1,L'_1)+\dist(L'_1,L'_2)+\dist(L'_2,L_2).
  \]
  If $x<_{L'_1} y$ and $y<_{L'_2} x$, then $x,y\in A_i$ for some $i$, so
  $\dist(L'_1,L'_2) \leq m \binom{k}{2} \leq \frac 12 \varepsilon k^2\log k$.
  It thus suffices to prove that $\dist(L,L') \le (7/3) \varepsilon k^2 \log k$, for any
  linear extension $L$ of $\mbf{P}_{k,\varepsilon}$.

  For the linear extension $L$, we need to consider, for each pair $(i,j)$
  with $i<j$, how many instances there are of an element $x$ of $A_i$ appearing
  above an element $y$ of $A_j$ in $L$ -- we say simply that $(x,y)$ is
  {\em reversed in $(A_i,A_j)$}.
  %For the case $j=i+1$, the doubling
  %property is not strong enough to significantly reduce the number of
  %pairs reversed, and so we will use $k^2$ as our upper bound on the
  %number of pairs reversed in $(A_i, A_{i+1})$.

  We start with the case where $j=i+2s+1$, i.e., there are an even
  number of levels between $A_i$ and $A_j$.  Consider
  first $s=0$, and let $T\subset A_i$ be the set consisting of the top
  $k/3$ elements of $A_i$ in $L$, and let $T'\subset A_{i+1}$ be the set
  consisting of the bottom $k/3$ elements of $A_{i+1}$ in $L$.
  %Let
  %\[U=\{y\in A_{i+1} \colon y>x\in T\}.\]
  %\text{ and } D=\{y\in A_{i+1}\colon
  %y < z\in T'\}.\]
  Suppose that some element of $A_i \setminus T$ is above some element
  $y$ of $A_{i+1} \setminus T'$ in $L$.  Then all the elements of $T$ are above
  all the elements of $T' \cup \{ y\}$, which means there are no edges between $T$
  and $T' \cup\{ y\}$ in $G_i$, contradicting the doubling property.
  %The doubling property for $G_i$ and $G_{i+1}$ guarantees that
  %$|D|$ and $|U|$ are both greater than
  %$2\lfloor k/3\rfloor> k/2$, so $D\cap U$ is nonempty. Hence, there
  %is some $x\in T$ and $y\in T'$ with $x<_{P^k} y$.  Thus
  Thus every element of $A_i\setminus T$ is below every element of
  $A_{i+1}\setminus T'$ in $L$, and therefore the number of reversed
  pairs $(x,y)$ in $(A_i,A_{i+1})$ is at most
  \[
  |T|\,|A_{i+1}| + |T'|\, |A_i| \le 2k^2/3.
  \]
  For $0<s\leq \lfloor \varepsilon \log k\rfloor /2$, let $T \subset
  A_i$ be the set consisting of the top $\lceil k/(2^s \cdot 3)
  \rceil$ elements of $A_i$ in $L$, and let $T' \subset A_{i+2s+1}$ be
  the set consisting of the bottom $\lceil k/(2^s \cdot 3) \rceil$
  elements of $A_{i+2s+1}$ in $L$.  Again suppose that some element of
  $A_i \setminus T$ is above some element $y$ of $A_{i+2s+1} \setminus
  T'$ in $L$.  Then all the elements of $T$ are above all the elements
  of $T' \cup \{ y\}$ in $L$.  By the doubling property, there are at
  least $2^s |T| \ge k/3$ elements of $A_{i+s}$ above some element of
  $T$ in $L$, and at least $2^s (|T'|+1) > k/3$ elements of
  $A_{i+s+1}$ below some element of $T' \cup \{ y\}$ in $L$, and again
  by the doubling property applied to $(k/3)$-element subsets of $T$
  and $T'\cup\{y\}$ this is not possible.
  %this argument
  %generalizes with $|T|=|T'|=\lfloor k/(2^s\cdot 3)\rfloor$, since for
  %$k$ sufficiently large, $s\leq \lfloor \varepsilon \log k\rfloor/2$
  %ensures that $2^{s+1}\lfloor k/(2^s\cdot 3)\rfloor> k/2$.
  Thus, we see that the number of pairs reversed in
  $(A_i,A_{i+2s+1})$ is at most $2k \lceil k/(2^s\cdot 3) \rceil \le k^2/(2^{s-1} \cdot 3) +2k$.
  For fixed $i$, summing over all possible values of $s$, we obtain an upper
  bound of
  \[\sum_{s=0}^{\lfloor \varepsilon\log k\rfloor /2}
  \left( \frac{k^2}{2^{s-1}\cdot 3} + 2k \right) \leq
  \frac{4k^2}{3} - \frac{2k^2}{3} 2^{-\varepsilon \log k/2} + k \varepsilon \log k \leq
  \frac{4k^2}{3}\]
  on the number of reversed pairs involving an element of $A_i$ and an element
  an odd number of layers above $A_i$.  Accounting for all
  values of $i$ gives that we reverse at most $4\varepsilon k^2/3 \log k$ pairs
  in pairs with an even number of layers between them (including zero).
  %of the form $(A_i,A_{i+2s+1})$ for all values of $i$ and $s$.

  One approach for counting pairs reversed in $(A_i,A_{i+2s+2})$ is to
  proceed as above, taking $T\subset A_i$ of size $\lceil k/(2^s\cdot
  3)\rceil$ and $T'\subset A_{i+2s+2}$ of size $\lceil
  k/(2^{s+1}\cdot 3)\rceil$.  As above, every reversed pair involves an element
  of either $T$ or $T'$, and so we reverse at most
  \[k\left\lceil\frac{k}{2^s\cdot 3}\right\rceil + k\left\lceil
    \frac{k}{2^{s+1}\cdot 3}\right\rceil\leq \frac{k^2}{2^{s+1}} + 2k\]
  pairs in $(A_i,A_{i+2s+2})$.  For fixed $i$, summing over all $s$
  with $0\leq s\leq \lfloor \varepsilon\log k\rfloor/2$, we have that
  at most $k^2$ pairs involving an element of $A_i$ and an element an even number
  of layers above $A_i$.  Multiplying this by $\varepsilon \log k$ gives that we reverse
  at most $\varepsilon k^2 \log k$ pairs with an odd number of layers between them.

  Therefore,
  \[
  \dist(L,L')\leq \frac{4\varepsilon k^2\log k}{3} + \varepsilon
  k^2\log k = \frac{7}{3}\varepsilon k^2\log k
  \]
  for any linear extension $L$ of $\mbf{P}_{k,\varepsilon}$, as required.  We are
  now able to conclude that $\led(\mbf{P}_{k,\varepsilon})\leq (31/6)\varepsilon
  k^2\log k$.
\end{proof}

As a consequence of Propositions~\ref{prop:inc-Pk} (with $r=10$) and
\ref{prop:led-Pk}, we have the following corollary.

\begin{cor}
Whenever $k$ is a sufficiently large multiple of~3, and $0 <\varepsilon \le 1/\log 10$, we have
\[
RR(\mbf{P}_{k,\varepsilon}) \le \frac{93}{8\varepsilon}\frac{1}{\log k}.
\]
\end{cor}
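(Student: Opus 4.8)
The plan is to read off the bound by dividing the estimates of the two preceding propositions, after checking that the hypotheses line up. First I would record the standing assumptions: for $k$ a sufficiently large multiple of~3, Lemma~\ref{lem:expand} with $r=10$ supplies a bipartite graph $G(k)$ on two classes of size $k$, with maximum degree at most~$10$, having the doubling property, and as noted just after that lemma this is all that is needed for $\mbf{P}_{k,\varepsilon}$ to be defined. Since $1/\log 10 < 1$, the hypothesis $0 < \varepsilon \le 1/\log 10$ is strong enough to invoke both Proposition~\ref{prop:inc-Pk} (which requires $\varepsilon \le 1/\log r$ with $r=10$) and Proposition~\ref{prop:led-Pk} (which requires $0 < \varepsilon \le 1$). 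Each proposition also asks for $k$ to exceed some threshold, so I would simply take $k$ larger than the maximum of these finitely many thresholds.

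Next I would substitute $r = 10$. Proposition~\ref{prop:inc-Pk} then gives
\[
\inc(\mbf{P}_{k,\varepsilon}) \ge \frac{8}{18}\,\varepsilon^2 k^2 \log^2 k = \frac{4}{9}\,\varepsilon^2 k^2 \log^2 k,
\]
while Proposition~\ref{prop:led-Pk} gives $\led(\mbf{P}_{k,\varepsilon}) \le \tfrac{31}{6}\,\varepsilon k^2 \log k$. Dividing, the factor $k^2$ and one factor each of $\log k$ and $\varepsilon$ cancel, leaving
\[
RR(\mbf{P}_{k,\varepsilon}) = \frac{\led(\mbf{P}_{k,\varepsilon})}{\inc(\mbf{P}_{k,\varepsilon})} \le \frac{31/6}{4/9}\cdot\frac{1}{\varepsilon \log k} = \frac{31 \cdot 9}{6 \cdot 4}\cdot\frac{1}{\varepsilon \log k} = \frac{93}{8}\cdot\frac{1}{\varepsilon \log k},
\]
which is exactly the claimed inequality.

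There is no genuine obstacle here; the corollary is pure bookkeeping on top of the work already done. The only points worth double-checking are the arithmetic simplification $\tfrac{31}{6}\cdot\tfrac{9}{4} = \tfrac{279}{24} = \tfrac{93}{8}$ and the compatibility of the ``sufficiently large~$k$'' conditions of the two propositions, handled as above. It is also worth adding, to connect with the abstract, that specializing to $\varepsilon = 1/\log 10$ produces a single poset $\mbf{P}_k := \mbf{P}_{k,1/\log 10}$ on $n = k\lfloor \log k/\log 10\rfloor \approx 0.434\,k\log k$ elements, for which the corollary gives $RR(\mbf{P}_k) \le \tfrac{93}{8}\cdot\tfrac{\log 10}{\log k} \le 27/\log k$, the bound advertised in the introduction.
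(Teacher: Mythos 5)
Your proposal is correct and is exactly the argument the paper intends: the corollary follows by dividing the bound of Proposition~\ref{prop:led-Pk} by that of Proposition~\ref{prop:inc-Pk} with $r=10$, and your arithmetic ($\tfrac{31}{6}\cdot\tfrac{9}{4}=\tfrac{93}{8}$) and hypothesis-checking are all in order. Nothing further is needed.
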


In particular, setting $\varepsilon = 1/\log 10$ gives us examples of posets $\mbf{P}_k$, for arbitrarily large
$k$, with $n = k \lfloor \log k / \log 10 \rfloor$ elements, and
\[
RR(\mbf{P}_k) \le \frac{93 \log 10}{8} \frac{1}{\log k} \le \frac{27}{\log n},
\]
where the last inequality holds for $k$ sufficiently large.

It is natural to ask if, for all posets $\mbf{P}$, we have
$RR(\mbf{P})\geq f(n)$, where $n$ is some reasonable parameter of the
poset---such as number of elements or width---and $f(n)$ tends to zero
fairly slowly with~$n$. In this section, we have shown that if such
an $f$ exists, it must be $O(1/\log n)$.

\section{Posets of fixed dimension}\label{sec:dim}

Because both order dimension and linear extension diameter focus on
linear extensions and reversing incomparable pairs, it is natural to
ask about the minimum value of $RR(\mbf{P})$ on a poset of dimension
$d$. An easy lower bound on the reversal ratio in terms of $d$ can be
obtained as follows. Let $\mc{R}$ be a realizer of size $d$ and
consider the sum
\[
\sum_{\substack{L_i,L_j\in \mc{R}\\i < j}}\dist(L_i,L_j).
\]
Each term is at most $\led(\mbf{P})$, and thus the sum is at most
$\binom{d}{2}\led(\mbf{P})$. On the other hand, each incomparable pair
must contribute 1 to at least $d-1$ terms in the sum, and so the sum is at
least $(d-1) \inc(\mbf{P})$.  Combining the two inequalities gives
$RR(\mbf{P})\geq 2/d$.

We define
$$
DRR(d) = \inf \{ RR(\mbf{P}) : \dim(\mbf{P}) \le d \}.
$$
The argument above shows that $DRR(d) \geq 2/d$ for each $d \ge 2$.
A poset $\mbf{P}$ with dimension $d\le 2$ is exactly one in which there is a pair of
linear extensions whose distance is equal to the number of incomparable
pairs, and so $RR(\mbf{P}) = 1$ for all posets $\mbf{P}$ of dimension at most~2,
and hence $DRR(2) = 1$.

In seeking upper bounds on $DRR(d)$ for $d \ge 3$, we need a $d$-dimensional
poset with small reversal ratio,
and we turn to the $d$-dimensional
grid $\mbf{n}^d$. This poset is the downset lattice of a disjoint
union of chains. In \cite{felsner:led-downlat}, Felsner and Massow
give an exact formula for the linear extension diameter of such
posets, from which we conclude that $\led(\mbf{n}^d)\sim n^{2d}/4$. On the
other hand, a straightforward calculation gives that
\[
\inc(\mbf{n}^d)\sim \frac{1}{2}n^{2d}\left(1-\frac{1}{2^{d-1}}\right).
\]
Hence, $RR(\mbf{n}^d)\to 1/(2(1-1/2^{d-1}))$ as $n \to \infty$.
This limit in turn tends to $1/2$ as $d\to\infty$, so these are not
good examples for large values of $d$.

For $d=3$, however, we deduce that $RR(\mbf{n}^3) \to 2/3$ as $n\to
\infty$, matching the lower bound.  Thus we have $DRR(3) = 2/3$. There
is already a gap between our upper and lower bounds for $d=4$, and we
don't know the value of $DRR(d)$ for any $d \ge 4$.  For $d=4$, the
bounds above give $1/2 \le DRR(4) \le 4/7$, and we know of no
improvements.

Perhaps of greater interest is the behavior of $DRR(d)$ as $d$
increases.  To move away from posets of small dimension, we recall the
posets $\mbf{P}_k = \mbf{P}_{k,1/\log 10}$ developed in Section~\ref{sec:prob}.  It is easy to see that
$\width(\mbf{P}_k) = k$ and so $\dim(\mbf{P}_k)\leq k$. Therefore $DRR(d) \le RR(\mbf{P}_d) \le 27/\log d$,
whenever $d$ is a sufficiently large multiple of~3.  (It is easy to deduce the same bound when $d$ is not
a multiple of~3.)  Our belief is that this upper bound is closer
to the true value of $DRR(d)$ than the lower bound of $2/d$. It is
worth noting that, for any $\varepsilon > 0$, one can show $\dim(\mbf{P}_{k, \varepsilon})\geq k^\delta$ for some
$0<\delta<1$, so having a more precise value for the dimension of
$\mbf{P}_k$ would not significantly improve the bounds on $DRR(d)$.

\section{Posets of fixed width}\label{sec:width}

Finally, we consider the minimum reversal ratio for posets of width
$w$.  In a similar manner to the previous section, we define
\[
WRR(w) = \inf \{ RR(\mbf{P}) : \width(\mbf{P}) \le w \}.
\]

Since $\dim(\mbf{P})\leq \width(\mbf{P})$ for every poset $\mbf{P}$,
we have $WRR(w) \ge DRR(w)$.
In particular, $WRR(2) = 1$, and the first interesting case is
that of width $3$; we know that $WRR(3) \geq DRR(3) = 2/3$.

A family of examples demonstrating that $WRR(3) \le 5/6$ is easy to come by.
Let $\mbf{W}_k$ denote a ``stack of $k$ standard examples'': we take antichains
$A_i = \{a_i,b_i,c_i\}$, for $i=0, \dots, k$, and, for each $i=1,\dots, k$, put
all of $A_{i-1}$ below all of $A_i$ except that the pairs $(a_{i-1},b_i)$,
$(b_{i-1},c_i)$ and $(c_{i-1},a_i)$ are incomparable.  For each $i$, no pair of
linear extensions can reverse all of the 6 incomparable pairs
$(a_i,b_i)$, $(a_i,c_i)$, $(b_i,c_i)$, $(a_{i-1},b_i)$, $(b_{i-1},c_i)$, $(c_{i-1},a_i)$.
Thus, of the $6k+3$ incomparable pairs in $\mbf{W}_k$, a pair of linear extensions
reverses at most $5k+3$.

However, we believe that neither the upper nor the lower bound is correct, and that
in fact $WRR(3)$ is equal to $3/4$.	We conclude by explaining the basis of this belief.

We construct a family of posets $\mbf{Z}_{t,k} = (Z,P)$, with $t\geq 3$ and
$k\geq 3$.  The set $Z$ consists of $t$ disjoint blocks $A_1, A_2, \dots, A_t$,
with each block $A_i$ containing $k$ elements arranged as a chain
$a_{i,1}<\cdots < a_{i,k}$.  For $j \ge i +2$, each element of $A_j$ is above
each element of $A_i$.	We also have comparabilities $a_{i,j} < a_{i+1,j}$, for
each $i =1, \dots t-1$ and $1\le j \le k$.  The remaining relations are those
induced by transitivity.

The poset $\mbf{Z}_{t,k}$ has width~3, with chains given by
$A_1<A_4<A_7< \cdots$, $A_2<A_5< \cdots$ and $A_3<A_6<\cdots$.	The only
incomparabilities are between pairs of blocks $A_i,A_\ell$, with $\ell=i+1$ or
$\ell=i+2$.  For such a pair $(i,\ell)$, we have $a_{i,j} < a_{\ell,m}$ if and
only if $j \le m$.  Therefore there are $(2t-3)\binom{k}{2}$ incomparable pairs.

The poset $\mbf{Z}_{9,4}$ is shown in Figure~\ref{fig:Z34}.

\begin{figure}
  \centering
  \begin{overpic}[]{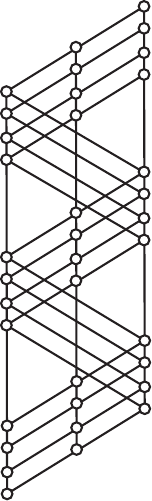}
    \put(-5,7){$A_1$}
    \put(14,4){$A_2$}
    \put(31,24){$A_3$}
    \put(-5,40){$A_4$}
  \end{overpic}
  \caption{The poset $\mbf{Z}_{9,4}$}
  \label{fig:Z34}
\end{figure}

\begin{thm}\label{thm:led-Ztk}
  The linear extension diameter of $\mbf{Z}_{t,k}$ is at least
  \[\left(t-1+\left\lceil\frac{t-2}{2}\right\rceil\right) \binom{k}{2}.\]
\end{thm}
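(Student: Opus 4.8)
The plan is to exhibit two explicit linear extensions $L_1,L_2$ of $\mbf{Z}_{t,k}$ and count directly how many incomparable pairs they reverse, arguing that this count is at least $(t-1+\lceil(t-2)/2\rceil)\binom{k}{2}$. The incomparable pairs come in two types: ``adjacent-block'' pairs between $A_i$ and $A_{i+1}$ (for $i=1,\dots,t-1$), and ``skip'' pairs between $A_i$ and $A_{i+2}$ (for $i=1,\dots,t-2$); there are $\binom{k}{2}$ reversible pairs in each block-pair in the sense that for $(i,\ell)$ with $\ell=i+1$ or $i+2$, the pairs $(a_{i,j},a_{\ell,m})$ with $j>m$ are incomparable (the comparability $a_{i,j}<a_{\ell,m}$ holds precisely when $j\le m$), giving $\binom{k}{2}$ incomparable pairs per block-pair. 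The target says we should reverse every adjacent-block contribution ($t-1$ of them) together with $\lceil(t-2)/2\rceil$ of the $t-2$ skip contributions — i.e. essentially ``every other'' skip pair.

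The key construction is to build $L_2$ from the ``natural'' extension $L_1$ by locally reversing the chain order inside suitable blocks. Concretely, take $L_1$ to list the blocks in the order $A_1,A_2,\dots,A_t$ (respecting $A_j>A_i$ for $j\ge i+2$ and $a_{i,j}<a_{i+1,j}$), with elements inside each $A_i$ in increasing index order $a_{i,1}<\cdots<a_{i,k}$. For $L_2$, keep the same block order but reverse the internal order of the blocks with \emph{odd} index, say, listing those as $a_{i,k}<\cdots<a_{i,1}$. One checks $L_2$ is still a linear extension: within a block the only constraints are the chain, and reversing it is fine because the comparabilities $a_{i,j}<a_{i+1,j}$ only link equal indices, so as long as blocks are kept in the order $A_1,\dots,A_t$ the transitive relations are respected regardless of internal orientation. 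Now in a block-pair $(A_i,A_{i+1})$ where exactly one of $i,i+1$ has had its block reversed, all $\binom{k}{2}$ incomparable pairs get reversed between $L_1$ and $L_2$ (this is the standard $2$-dimensional-grid phenomenon: flipping one coordinate's order reverses every incomparable pair among those with distinct indices). Since consecutive integers have opposite parity, \emph{every} adjacent block-pair has exactly one reversed endpoint, so we reverse all $(t-1)\binom{k}{2}$ adjacent pairs. For a skip-pair $(A_i,A_{i+2})$, $i$ and $i+2$ have the \emph{same} parity, so either both or neither block is reversed, and in that case the internal orders agree and no incomparable pair between them is reversed — which is the wrong direction. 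The fix is to use the freedom in choosing \emph{which} blocks to flip more cleverly, or to stagger: instead of flipping all odd blocks, choose a subset $S\subseteq\{1,\dots,t\}$ so that as many block-pairs $(i,i+1)$ and $(i,i+2)$ as possible have exactly one endpoint in $S$. One cannot win all of them simultaneously, and the arithmetic of how many you can win is exactly what produces the $\lceil(t-2)/2\rceil$ term.

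So the real combinatorial core is: choose $S\subseteq\{1,\dots,t\}$ maximizing (number of adjacent pairs $(i,i+1)$ with $|S\cap\{i,i+1\}|=1$) plus (number of skip pairs $(i,i+2)$ with $|S\cap\{i,i+2\}|=1$), and show this maximum is at least $(t-1)+\lceil(t-2)/2\rceil$. I would take $S=\{i : i\equiv 1,2 \pmod 4\}$, i.e. flip blocks with indices $1,2,5,6,9,10,\dots$: then every adjacent pair $(i,i+1)$ straddles the boundary of this pattern except... one checks the pattern $1,1,0,0,1,1,0,0,\dots$ on parity-$1$ positions has a ``change'' between $i$ and $i+1$ exactly half the time, which is not enough. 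The correct choice is $S=\{i : \lceil i/1\rceil \text{ odd}\}$ refined so that the XOR-pattern $(s_i)$ has $s_i\ne s_{i+1}$ for all $i$ \emph{and} $s_i\ne s_{i+2}$ for as many $i$ as possible; since $s_i\ne s_{i+1}$ for all $i$ forces $s_i=s_{i+2}$ for all $i$, those two demands are in tension, and the extremal tradeoff is to abandon the all-adjacent requirement only where forced. I expect the clean answer is: alternate with occasional ``double'' — e.g. pattern of flips $0,1,0,1,1,0,1,0,0,1,\dots$ chosen so adjacent differences are always $1$ and skip differences are $1$ on a $\lceil(t-2)/2\rceil$-sized set. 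The main obstacle, and the step I would spend the most care on, is this discrete optimization: verifying that the chosen flip-set $S$ simultaneously achieves the stated count, and — just as importantly — double-checking that ``exactly one endpoint flipped'' really does force \emph{all} $\binom{k}{2}$ incomparable pairs in that block-pair to be reversed and that ``both or neither flipped'' is genuinely irrelevant (contributes $0$, not something negative), so that the contributions simply add. Everything else — that $L_1,L_2$ are valid linear extensions, and that $\led\ge\dist(L_1,L_2)$ — is routine.
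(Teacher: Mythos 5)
There is a fatal flaw at the very first step: each block $A_i$ is a \emph{chain} in $\mbf{Z}_{t,k}$ (the poset includes the relations $a_{i,1}<a_{i,2}<\cdots<a_{i,k}$), so listing a block as $a_{i,k}<\cdots<a_{i,1}$ violates these relations and your $L_2$ is not a linear extension at all. Your parenthetical justification (``within a block the only constraints are the chain, and reversing it is fine'') is self-refuting. Moreover, even if one ignores this, the construction reverses nothing: every incomparable pair of $\mbf{Z}_{t,k}$ consists of elements from two \emph{different} blocks, and since both of your extensions keep the blocks consecutive and in the same order $A_1,\dots,A_t$, the relative order of any two elements from different blocks is identical in $L_1$ and $L_2$. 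The ``flip one coordinate of a grid'' phenomenon you invoke does not apply here, because the elements being compared do not sit inside a single block whose orientation you are toggling. So $\dist(L_1,L_2)=0$ for your pair, and no choice of the flip-set $S$ can rescue the approach.

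The mechanism that actually works (and is what the paper uses) respects the forced internal order of every block but varies how blocks are \emph{interleaved}. If in one extension $A_i$ appears entirely below $A_j$ (with $j=i+1$ or $i+2$), while in the other the two blocks are ``entangled'' as $a_{i,1}<a_{j,1}<a_{i,2}<a_{j,2}<\cdots$, then every incomparable pair $(a_{i,m'},a_{j,m})$ with $m'>m$ is reversed, since entangling puts $a_{j,m}$ below $a_{i,m'}$ exactly when $m<m'$. The discrete optimization you correctly anticipate is then over which consecutive pairs and triples of blocks to entangle in each extension; the paper entangles the triples $A_{4i-3}/A_{4i-2}/A_{4i-1}$ in $L_1$ and $A_{4i-1}/A_{4i}/A_{4i+1}$ in $L_2$, which entangles every adjacent pair in exactly one extension (giving $t-1$ contributions of $\binom{k}{2}$) and the odd-indexed distance-two pairs (giving $\lceil(t-2)/2\rceil$ more). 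Your proposal identifies the right bookkeeping target but builds it on an invalid reversal mechanism, so the gap is not the unfinished optimization at the end but the core construction.
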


\begin{proof}
  To show this lower bound on the linear extension diameter, we
  construct two linear extensions $L_1$ and $L_2$ of $\mbf{Z}_{t,k}$.

For a triple $A_\ell,A_{\ell+1},A_{\ell+2}$ of three consecutive blocks of $\mbf{Z}_{t,k}$,
we write $A_\ell/A_{\ell+1}/A_{\ell+2}$ for the following linear extension of the poset
restricted to $A_\ell\cup A_{\ell+1} \cup A_{\ell+2}$:
$$
a_{\ell,1} < a_{\ell+1,1} < a_{\ell+2,1} < a_{\ell,2} < a_{\ell+1,2} < a_{\ell+2,2} < a_{\ell,3} < \cdots
< a_{\ell,k} < a_{\ell+1,k} < a_{\ell+2,k}.
$$
For a consecutive pair of blocks, we define $A_\ell/A_{\ell+1}$ similarly, omitting the elements
$a_{\ell+2,j}$ from the order described above.

We will consider linear extensions $L$ of $\mbf{Z}_{t,k}$ in which blocks appear either consecutively or
in pairs and triples $A_\ell/A_{\ell+1}$ or $A_\ell/A_{\ell+1}/A_{\ell+2}$.
If, in some such linear extension $L$, $A_i$ and $A_j$ ($j=i+1$ or $j=i+2$) appear
together within an order $A_\ell/A_{\ell+1}/A_{\ell+2}$, we say that the pair $(A_i,A_j)$ is {\em entangled} in $L$.

Observe that, if the pair $(A_i,A_j)$ appears entangled in one linear extension, and in order in the other (i.e.,
with all of $A_i$ below all of $A_j$), then every pair of incomparable pairs between $A_i$ and $A_j$ is reversed
by the pair of linear extensions.

  We now form specific linear extensions of the form described that achieve the required lower bound.
  We start $L_1$ with $A_1/A_2/A_3$ and
  $L_2$ with $A_1$ in its entirety followed by $A_2$ and then $A_3/A_4/A_5$. The continuation
  of this pattern through the blocks appearing in $\mbf{Z}_{12,k}$ is
  shown below.
 \begin{center}
   \begin{tabular}{ccc}
     $L_1$ & \qquad & $L_2$\\\hline
     $A_{12}$  &\qquad& $A_{11}/A_{12}$\\
     $A_9/A_{10}/A_{11}$  &\qquad& $A_{10}$\\
     $A_8$ &\qquad&  $A_7/A_8/A_9$\\
     $A_5/A_6/A_7$  &\qquad& $A_6$\\
     $A_4$ &\qquad&  $A_3/A_4/A_5$\\
     $A_1/A_2/A_3$  &\qquad& $A_2$\\
     &\qquad&  $A_1$
   \end{tabular}
 \end{center}
 More formally, the linear extension $L_1$ contains each linear order $A_{4i-3}/A_{4i-2}/A_{4i-1}$, for $i \ge 1$, and the blocks $A_{4i}$
 ($i\ge 1$) appear separately, above all blocks with lower index and below all blocks with higher index.  If $t$ is congruent to 1 or 2 modulo~4,
 so that some designated triple $A_{4i-3}/A_{4i-2}/A_{4i-1}$ includes $A_t$ and supposedly some non-existent blocks with higher index, then the
 blocks beyond $A_t$ are omitted.  Similarly, $L_2$ includes each linear order $A_{4i-1}/A_{4i}/A_{4i+1}$ ($i\ge 1$), with the blocks
 $A_{4i+2}$ appearing singly: again, blocks with index outside $\{1, \dots, t\}$ are omitted.

 To determine the number of incomparable pairs reversed between $L_1$ and $L_2$, we
 first observe that, for blocks $A_i$ and $A_j$, with $j=i+1$ or $j=i+2$, the pair $(A_i,A_j)$
 is entangled in at most one of the linear extensions.
Moreover, each of the $t-1$ consecutive pairs $(A_i,A_{i+1})$ is entangled in exactly one of the two
linear extensions, so the incomparable pairs between such a pair of blocks are all reversed.
For the pairs at distance~2, the $\lceil (t-2)/2 \rceil$ pairs $(A_i,A_{i+2})$ with $i$ odd are
entangled in one of the two linear extensions, but the pairs with $i$ even are not.
Therefore,
 \[\dist(L_1,L_2) =
 \left(t-1+\left\lceil\frac{t-2}{2}\right\rceil\right)\binom{k}{2},\]
 and consequently this is a lower bound on $\led(\mbf{Z}_{t,k})$, as claimed.
\end{proof}

Since $\inc(\mbf{Z}_{t,k}) = (2t-3)\binom{k}{2}$,
Theorem~\ref{thm:led-Ztk} implies that $RR(\mbf{Z}_{t,k})\geq 3/4$. We
believe the inequality of Theorem~\ref{thm:led-Ztk} to be an equality
for $k$ and $t$ sufficiently large, which would imply that $WRR(3) \ge 3/4$.
Moreover, we suspect that this family of examples is (essentially) extremal.

\begin{conj*}
  $WRR(3) = 3/4$.
\end{conj*}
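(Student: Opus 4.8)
The plan is to prove the conjecture in two halves, each of which is itself currently open: the upper bound $WRR(3)\le 3/4$, which requires showing that the family $\mbf{Z}_{t,k}$ (or a variant) has reversal ratio tending to $3/4$; and the lower bound $WRR(3)\ge 3/4$, a statement about every poset of width at most~$3$. Since $\inc(\mbf{Z}_{t,k})=(2t-3)\binom{k}{2}$ and \thmref{thm:led-Ztk} already gives $\led(\mbf{Z}_{t,k})\ge (t-1+\lceil (t-2)/2\rceil)\binom{k}{2}$, with the resulting ratio $(t-1+\lceil(t-2)/2\rceil)/(2t-3)$ decreasing to $3/4$, the upper bound reduces to proving the matching inequality $\led(\mbf{Z}_{t,k})\le (t-1+\lceil(t-2)/2\rceil+o(t))\binom{k}{2}$ for $k$ large, and then letting $t\to\infty$.

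For that matching upper bound, fix two linear extensions $L_1,L_2$ of $\mbf{Z}_{t,k}$ and, for the blocks $A_i$, let $r_i$ be the number of reversed incomparable pairs between $A_i$ and $A_{i+1}$ and $s_i$ the number between $A_i$ and $A_{i+2}$; trivially $r_i,s_i\le\binom{k}{2}$. I would aim to show that the reversals of the distance-$2$ pairs are globally constrained — morally, that $\sum_i s_i$ cannot exceed $(1+o(1))\lceil (t-2)/2\rceil\binom{k}{2}$ — by exploiting the ``diagonal'' comparabilities $a_{j,m}<a_{j+1,m}$: if $a_{i,p}$ lies above $a_{i+2,q}$ in a linear extension $L$, then tracing through $A_{i+1}$ in the style of \propref{prop:led-Pk} forces many pairs between neighbouring blocks into their correct order, so that heavily reversing one band of distance-$2$ pairs suppresses reversals in an adjacent band. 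Making this precise should yield a system of linear inequalities among the $r_i$ and $s_i$ whose linear-programming optimum equals $(t-1+\lceil(t-2)/2\rceil)\binom{k}{2}$; I expect this step to be laborious but essentially elementary, a more careful version of the counting already done in \propref{prop:led-Pk}. One should also confirm that no linear extension cleverer than the block-\emph{entangling} ones of \thmref{thm:led-Ztk} does better, which is the content of the extremality claim made after that theorem.

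For the lower bound, fix an arbitrary width-$3$ poset $\mbf{P}$ with a Dilworth decomposition into chains $C_1\cup C_2\cup C_3$ and a pair $L_1,L_2$ of linear extensions; write $I_{ab}$ for the set of incomparable pairs between $C_a$ and $C_b$ and $D_{ab}\subseteq I_{ab}$ for those reversed by $L_1,L_2$. The trivial bound $|D_{ab}|\le |I_{ab}|$ only gives $RR(\mbf{P})\le 1$, so the heart of the argument must be a quantitative ``$S_3$ obstruction'': the three deficits $|I_{ab}|-|D_{ab}|$ cannot all be small. Concretely, the phenomenon that makes the standard example $S_3$ three-dimensional should persist in weighted form — whenever $L_1$ and $L_2$ reverse almost all of $I_{12}$, they are forced, across each triple $x\in C_1,\ y\in C_2,\ z\in C_3$ forming a copy of $S_3$, to keep a corresponding $C_1$--$C_3$ or $C_2$--$C_3$ pair in order, and summing these local losses over all such triples (in the spirit of the six-pairs observation for $\mbf{W}_k$) should bound $|D_{12}|+|D_{13}|+|D_{23}|$ by $\tfrac34\inc(\mbf{P})+o(\inc(\mbf{P}))$.

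The main obstacle is precisely to turn this heuristic into a clean combinatorial inequality: one needs a potential function (or an LP dual) that is local enough to see the $S_3$ obstruction yet sums globally to exactly $3/4$, and no larger, matching $\mbf{Z}_{t,k}$ — and the jump from the easy averaging bound $2/3$ implicit in $DRR(3)$ up to $3/4$ shows that nothing as crude as averaging over a $3$-realizer can suffice. A plausible route is a compression/uncrossing argument that reduces a general width-$3$ poset to a ``stacked'' one of $\mbf{Z}_{t,k}$-type while not decreasing the reversal count and controlling $\inc$, but keeping both quantities under control simultaneously during such surgery is delicate. Absent a full proof, establishing $WRR(3)\ge 3/4-\varepsilon$, or settling the special case in which all incomparabilities lie between blocks at distance at most~$2$, would already be worthwhile partial progress.
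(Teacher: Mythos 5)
This statement is an open conjecture in the paper, not a theorem: the authors' only rigorous contribution towards it is \thmref{thm:led-Ztk}, which gives $\led(\mbf{Z}_{t,k})\ge\bigl(t-1+\lceil(t-2)/2\rceil\bigr)\binom{k}{2}$, together with the (unproved) beliefs that this bound is tight and that the family is essentially extremal. Your proposal is likewise a plan rather than a proof, as you yourself acknowledge, so neither half is actually established; you do identify the two required ingredients correctly (a matching upper bound on $\led(\mbf{Z}_{t,k})$, which would give $WRR(3)\le 3/4$, and a universal statement about all width-$3$ posets, which would give $WRR(3)\ge 3/4$), but nothing beyond what the paper already proves is derived.

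More seriously, your sketch of the ``lower bound'' half argues in the wrong direction. You fix an arbitrary width-$3$ poset and an arbitrary pair $L_1,L_2$ and aim to show $|D_{12}|+|D_{13}|+|D_{23}|\le\tfrac34\inc(\mbf{P})+o(\inc(\mbf{P}))$, i.e.\ that \emph{no} pair of linear extensions reverses much more than $3/4$ of the incomparable pairs. That would prove $RR(\mbf{P})\le 3/4+o(1)$ for every width-$3$ poset, which is false (any width-$3$ poset of dimension~$2$, such as a $3$-element antichain stacked with anything $2$-dimensional, has $RR(\mbf{P})=1$), and it is in any case not what $WRR(3)\ge 3/4$ asks for: that bound is existential, requiring one to \emph{construct}, for every width-$3$ poset, a pair of linear extensions reversing at least $3/4$ of its incomparable pairs. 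The ``deficits cannot all be small'' heuristic is thus aimed at the wrong target; it is meaningful only for the specific candidate family, i.e.\ for the upper-bound half. For that upper-bound half, the proposed reduction to ``a more careful version of \propref{prop:led-Pk}'' is also not available as stated: that proof leans entirely on the doubling (expansion) property of the connecting bipartite graphs, whereas consecutive blocks of $\mbf{Z}_{t,k}$ are joined by a staircase of covers with no expansion, so a genuinely different argument would be needed to prove $\led(\mbf{Z}_{t,k})\le\bigl(t-1+\lceil(t-2)/2\rceil\bigr)\binom{k}{2}$. As it stands, the proposal restates the conjecture's two halves and their difficulty, consistent with the paper's own discussion, but closes neither.
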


We can form similar families of posets for higher values of the width, and it is
possible that these remain extremal.

\section{Acknowledgements} 

We thank the referees for a careful reading, leading to substantial improvements
in the paper.  

%\nocite{brightwell:led-diampairs,trotter:dimbook}

\end{document}